\newcommand*{\mailto}[1]{\href{mailto:#1}{\nolinkurl{#1}}}
\newcommand{\bbN}{{\mathbb{N}}}
\newcommand{\bbR}{{\mathbb{R}}}
\newcommand{\bbZ}{{\mathbb{Z}}}
\newcommand{\bbC}{{\mathbb{C}}}
\newcommand{\cB}{{\mathcal B}}
\newcommand{\cD}{{\mathcal D}}
\newcommand{\cH}{{\mathcal H}}
\newcommand{\cJ}{{\mathcal J}}
\newcommand{\cM}{{\mathcal M}}
\newcommand{\cW}{{\mathcal W}}
\newcommand{\cX}{{\mathcal X}}
\newcommand{\no}{\notag}
\newcommand{\lb}{\label}
\newcommand{\ol}{\overline}
\newcommand{\wti}{\widetilde}
\newcommand{\f}{\frac}
\newcommand{\bi}{\bibitem}
\newcommand{\hatt}{\widehat}
\renewcommand{\Re}{\mathop\mathrm{Re}}
\renewcommand{\le}{\leqslant}
\DeclareMathOperator{\dom}{dom}
\DeclareMathOperator{\ran}{ran}
\DeclareMathOperator*{\nlim}{n-lim}
\DeclareMathOperator*{\slim}{s-lim}
\newcommand{\Om}{\Omega}
\allowdisplaybreaks \numberwithin{equation}{section}
\newtheorem{theorem}{Theorem}[section]
\newtheorem{lemma}[theorem]{Lemma}
\newtheorem{hypothesis}[theorem]{Hypothesis}
\newtheorem{example}[theorem]{Example}
\theoremstyle{remark}
\newtheorem{remark}[theorem]{Remark}
\begin{document}

\title[The Krein--von Neumann Extension]{The Krein--von Neumann Extension and its
Connection to an Abstract Buckling Problem}

\author[M.\ S.\ Ashbaugh]{Mark S.\ Ashbaugh}
\address{Department of Mathematics,
University of Missouri, Columbia, MO 65211, USA}
\email{\mailto{ashbaughm@missouri.edu}}
\urladdr{\url{http://www.math.missouri.edu/personnel/faculty/ashbaughm.html}}

\author[F.\ Gesztesy]{Fritz Gesztesy}
\address{Department of Mathematics,
University of Missouri, Columbia, MO 65211, USA}
\email{\mailto{gesztesyf@missouri.edu}}
\urladdr{\url{http://www.math.missouri.edu/personnel/faculty/gesztesyf.html}}

\author[M.\ Mitrea]{Marius Mitrea}
\address{Department of Mathematics, University of
Missouri, Columbia, MO 65211, USA}
\email{\mailto{mitream@missouri.edu}}
\urladdr{\url{http://www.math.missouri.edu/personnel/faculty/mitream.html}} 

\author[R.\ Shterenberg]{Roman Shterenberg}
\address{Department of Mathematics, University of Alabama at Birmingham, 
Birmingham, AL 35294, USA}
\email{\mailto{shterenb@math.uab.edu}}

\author[G. Teschl]{Gerald Teschl}
\address{Faculty of Mathematics\\ University of Vienna\\ 
Nordbergstrasse 15\\ 1090 Wien\\ Austria\\ and International Erwin Schr\"odinger
Institute for Mathematical Physics\\ Boltzmanngasse 9\\ 1090 Wien\\ Austria} 
\email{\mailto{Gerald.Teschl@univie.ac.at}}
\urladdr{\url{http://www.mat.univie.ac.at/~gerald/}}

\thanks{Based upon work partially supported by the US National Science
Foundation under Grant Nos.\ DMS-0400639 and
FRG-0456306 and the Austrian Science Fund (FWF) under Grant No.\ Y330.}
\dedicatory{Dedicated to the memory of Erhard Schmidt (1876--1959).}
\thanks{Math. Nachr. {\bf 283:2}, 165--179 (2010).}
\date{\today}
\subjclass[2000]{Primary 35J25, 35J40, 47A05; Secondary 47A10, 47F05.}
\keywords{Krein--von Neumann extension, buckling problem}

\begin{abstract}
We prove the unitary equivalence of the inverse of the Krein--von Neumann extension 
(on the orthogonal complement of its kernel) of a densely defined, closed, strictly positive
operator, $S\geq \varepsilon I_{\mathcal{H}}$ for some $\varepsilon >0$ in a Hilbert space $\mathcal{H}$ to an abstract buckling problem operator. 

In the concrete case where $S=\overline{-\Delta|_{C_0^\infty(\Omega)}}$ in 
$L^2(\Omega; d^n x)$ for $\Omega\subset\mathbb{R}^n$ an open, bounded (and sufficiently regular) domain, this recovers, as a particular case of a general result due to G. Grubb, that the eigenvalue problem for the Krein Laplacian $S_K$ (i.e., the Krein--von Neumann extension of $S$), 
\[
S_K v = \lambda v, \quad \lambda \neq 0, 
\]
is in one-to-one correspondence with the problem of {\em the buckling of a clamped plate},
\[
(-\Delta)^2u=\lambda (-\Delta) u \, \text{ in } \, \Omega, \quad \lambda \neq 0, 
\quad u\in H_0^2(\Omega),  
\]
where $u$ and $v$ are related via the pair of formulas 
\[
u = S_F^{-1} (-\Delta) v, \quad v = \lambda^{-1}(-\Delta) u,
\]
with $S_F$ the Friedrichs extension of $S$.

This establishes the Krein extension as a natural object in elasticity theory
(in analogy to the Friedrichs extension, which found natural applications in quantum
mechanics, elasticity, etc.). 
\end{abstract}

\maketitle


\section{Introduction}
\label{s1}

Suppose that $S$ is a densely defined, symmetric, closed operator with nonzero deficiency indices in a separable complex Hilbert space $\cH$ that satisfies 
\begin{equation}
S\geq \varepsilon I_{\cH} \, \text{ for some $\varepsilon >0$,}    \lb{1.1}
\end{equation}
and denote by $S_K$ and $S_F$ the Krein--von Neumann and Friedrichs extensions of 
$S$, respectively (with $I_{\cH}$ the identity operator in $\cH$).

Then an abstract version of Proposition\ 1 in Grubb \cite{Gr83}, describing an  intimate connection between the nonzero eigenvalues of the Krein--von Neumann extension of an appropriate minimal elliptic differential operator of order $2m$, 
$m\in\bbN$, and nonzero eigenvalues of a suitable higher-order buckling problem (cf.\ Example \ref{e3.6}), to be proved in Lemma \ref{l3.3}, can be summarized as  follows:
\begin{align}
& \text{There exists $0 \neq v \in \dom(S_K)$ satisfying } \,  S_K v = \lambda v, 
\quad \lambda \neq 0,   \lb{1.1a} \\
& \text{if and only if}  \no \\
& \text{there exists a $0 \neq u \in \dom(S^* S)$ such that } \, 
S^* S u = \lambda S u,   \lb{1.1b} 
\end{align}
and the solutions $v$ of \eqref{1.1a} are in one-to-one correspondence with the solutions $u$ of \eqref{1.1b} given by the pair of formulas
\begin{equation}
u = (S_F)^{-1} S_K v,    \quad  v = \lambda^{-1} S u.   \lb{1.1c}
\end{equation}

Next, we will go a step further and describe a unitary equivalence result going beyond the connection between the eigenvalue problems \eqref{1.1a} and \eqref{1.1b}: Given $S$, we introduce the following sesquilinear forms in $\cH$,
\begin{align}
a(u,v) & = (Su,Sv)_{\cH}, \quad u, v \in \dom(a) = \dom(S),    \lb{1.2} \\
b(u,v) & = (u,Sv)_{\cH}, \quad u, v \in  \dom(b) = \dom(S).    \lb{1.3} 
\end{align}
Then $S$ being densely defined and closed, implies that the sesquilinear form $a$ is also densely defined and closed, and thus one can introduce the Hilbert space 
\begin{equation}
\cW=(\dom(S), (\cdot,\cdot)_{\cW})    \lb{1.4}
\end{equation}
with associated scalar product 
\begin{equation}
(u,v)_{\cW}=a(u,v) = (Su,Sv)_{\cH}, \quad u, v \in \dom(S).   \lb{1.5}
\end{equation}
Suppressing for simplicity the continuous embedding operator of $\cW$ 
into $\cH$, we now introduce the following operator $T$ in $\cW$ by
\begin{align} 
(w_1,T w_2)_{\cW} & = a( w_1,T w_2) = b(w_1,w_2) 
= (w_1,S w_2)_{\cH},  \quad w_1, w_2 \in \cW.    \lb{1.8}
\end{align}
One can prove that $T$ is self-adjoint, nonnegative, and bounded and we will call $T$ the 
{\it abstract buckling problem operator} associated with the Krein--von Neumann extension $S_K$ of $S$.  

Next, introducing the Hilbert space $\hatt \cH$ by 
\begin{equation} 
\hatt \cH = [\ker (S^*)]^{\bot} = \big[I_{\cH} - P_{\ker(S^*)}\big] \cH  
= \big[I_{\cH} - P_{\ker(S_K)}\big] \cH = [\ker (S_K)]^{\bot}, 
\end{equation}
where $P_{\cM}$ denotes the orthogonal projection onto the subspace 
$\cM \subset \cH$, we introduce the operator
\begin{equation}
\hatt S: \begin{cases} \cW \to \hatt \cH,  \\
w \mapsto S w,  \end{cases}      \lb{1.12}
\end{equation}
and note that $\hatt S\in\cB(\cW,\hatt \cH)$ maps $\cW$ unitarily onto $\hatt \cH$. 

Finally, defining the {\it reduced Krein--von Neumann operator}  
$\hatt S_K$ in $\hatt \cH$ by 
\begin{equation} 
\hatt S_K:=S_K|_{[\ker(S_K)]^{\bot}}   \, \text{ in $\hatt \cH$,}   \label{1.13}
\end{equation} 
we can state the principal unitary equivalence result to be proved in Theorem \ref{t3.3}:

The inverse of the reduced Krein--von Neumann operator $\hatt S_K$ in 
$\hatt \cH$ and the abstract buckling problem operator $T$ in $\cW$ are unitarily equivalent, 
\begin{equation}
\big(\hatt S_K\big)^{-1} = \hatt S T (\hatt S)^{-1}.    \lb{1.20}
\end{equation}
In addition, 
\begin{equation}
\big(\hatt S_K\big)^{-1} = U_S \big[|S|^{-1} S |S|^{-1}\big] (U_S)^{-1}.    \lb{1.20a}
\end{equation}

Here we used the polar decomposition of $S$, 
\begin{equation}
S = U_S |S|,  \, \text{ with } \,  
|S| = (S^* S)^{1/2} \geq \varepsilon I_{\cH}, \; \varepsilon > 0,  \, \text{ and } \, 
U_S \in \cB\big(\cH,\hatt \cH\big) \, \text{ unitary,}    \lb{1.19b}
\end{equation}
and one observes that the operator $|S|^{-1} S |S|^{-1}\in\cB(\cH)$ in \eqref{1.20a} 
is self-adjoint in $\cH$. 

As discussed at the end of Section \ref{s3}, one can readily rewrite the abstract linear pencil buckling eigenvalue problem \eqref{1.1b}, $S^* S u = \lambda S u$, 
$\lambda \neq 0$, in the form of the standard eigenvalue problem
$|S|^{-1} S |S|^{-1} w = \lambda^{-1} w$, $\lambda \neq 0$, $w = |S| u$, and hence establish the connection between \eqref{1.1a}, \eqref{1.1b} and \eqref{1.20}, 
\eqref{1.20a}.  
 
As mentioned in the abstract, the concrete case where $S$ is given by  
$S=\overline{-\Delta|_{C_0^\infty(\Omega)}}$ in $L^2(\Omega; d^n x)$, then yields the spectral equivalence between the inverse of the reduced Krein--von Neumann extension $\hatt S_K$ of $S$ and the problem of the buckling of a clamped plate. 
More generally, Grubb \cite{Gr83} actually treated the case where $S$ is generated by an appropriate elliptic differential expression of order $2m$, $m\in\bbN$, and also introduced the higher-order analog of the buckling problem; we briefly summarize this in Example \ref{e3.6}.

\section{The Abstract Krein--von Neumann Extension}
\label{s2}

To get started, we briefly elaborate on the notational conventions used
throughout this paper and especially throughout this section which collects abstract material on the Krein--von Neumann extension. Let $\cH$ be a separable complex Hilbert space, $(\cdot,\cdot)_{\cH}$ the scalar product in $\cH$ (linear in
the second factor), and $I_{\cH}$ the identity operator in $\cH$.
Next, let $T$ be a linear operator mapping (a subspace of) a
Banach space into another, with $\dom(T)$, $\ran(T)$, and $\ker(T)$ denoting the
domain, range, and kernel (i.e., null space) of $T$. The closure of a closable 
operator $S$ is denoted by $\ol S$. The spectrum, essential spectrum, discrete spectrum, and resolvent set of a closed linear operator in $\cH$ will be denoted by 
$\sigma(\cdot)$, $\sigma_{\rm ess}(\cdot)$, $\sigma_{\rm d}(\cdot)$, and 
$\rho(\cdot)$, respectively. The
Banach spaces of bounded and compact linear operators in $\cH$ are
denoted by $\cB(\cH)$ and $\cB_\infty(\cH)$, respectively. Similarly,
the Schatten--von Neumann (trace) ideals will subsequently be denoted
by $\cB_p(\cH)$, $p\in (0,\infty)$. Analogous notation $\cB(\cH_1,\cH_2)$,
$\cB_\infty (\cH_1,\cH_2)$, etc., will be used for bounded, compact,
etc., operators between two Hilbert spaces $\cH_1$ and $\cH_2$. 
Whenever applicable,
we retain the same type of notation in the context of Banach spaces.
Moreover, $\cX_1\hookrightarrow \cX_2$ denotes the continuous embedding
of the Banach space $\cX_1$ into the Banach space $\cX_2$. $\cX_1 \dotplus \cX_2$ 
denotes the (not necessarily orthogonal) direct sum of the subspaces $\cX_1$ and 
$\cX_2$ of $\cX$. 

A linear operator $S:\dom(S)\subseteq\cH\to\cH$, is called {\it symmetric}, if
\begin{equation}\label{Pos-2}
(u,Sv)_\cH=(Su,v)_\cH, \quad u,v\in \dom (S).
\end{equation}
In this manuscript we will be particularly interested in this question within the class of 
densely defined (i.e., $\ol{\dom(S)}=\cH$), non-negative operators (in fact, in most instances $S$ will even turn out to be strictly positive) and we focus almost exclusively on self-adjoint extensions that are non-negative operators. In the latter scenario, there are two distinguished constructions which we review briefly next. 

To set the stage, we recall that a linear operator $S:\dom(S)\subseteq\cH\to \cH$
is called {\it non-negative} provided
\begin{equation}\label{Pos-1}
(u,Su)_\cH\geq 0, \quad u\in \dom(S).
\end{equation}
(In particular, $S$ is symmetric in this case.) $S$ is called {\it strictly positive}, if for some 
$\varepsilon >0$, $(u,Su)_\cH\geq \varepsilon \|u\|_{\cH}^2$, $u\in \dom(S)$. Next, we recall that $A \leq B$ for two self-adjoint operators in $\cH$ if 
\begin{align}
\begin{split}
& \dom\big(|A|^{1/2}\big) \supseteq \dom\big(|B|^{1/2}\big) \, \text{ and } \\ 
& \big(|A|^{1/2}u,U_A |A|^{1/2}u\big)_{\cH} \leq \big(|B|^{1/2}u, U_B |B|^{1/2}u\big)_{\cH}, \quad  
u \in \dom\big(|B|^{1/2}\big).      \lb{AleqB} 
\end{split}
\end{align}
Here $U_C$ denotes the partial isometry in $\cH$ in the polar decomposition 
$C = U_C |C|$, $|C|=(C^* C)^{1/2}$, of a densely defined closed operator $C$ in $\cH$. (If $C$ is in addition self-adjoint, then $|C|$ and $U_C$ commute.) 
We also recall that for $A\geq 0$ self-adjoint, 
\begin{equation}
\ker(A) =\ker\big(A^{1/2}\big)
\end{equation}
(with $D^{1/2}$ denoting the unique nonnegative square root of a nonnegative self-adjoint operator $D$ in $\cH$).

For simplicity we will always adhere to the conventions that $S$ is a linear, unbounded, densely defined, nonnegative (i.e., $S\geq 0$) operator in $\cH$, and that $S$ has nonzero deficiency indices. Since $S$ is bounded from below, the latter are necessarily equal. In particular,  
\begin{equation}
{\rm def} (S) = \dim (\ker(S^*-z I_{\cH})) \in \bbN\cup\{\infty\}, 
\quad z\in \bbC\backslash [0,\infty), 
\lb{DEF}
\end{equation}
is well-known to be independent of $z$.  
Moreover, since $S$ and its closure $\ol{S}$ have the same self-adjoint extensions in $\cH$, we will without loss of generality assume that $S$ is closed in the remainder of this paper.

The following is a fundamental result to be found in M.\ Krein's celebrated 1947 paper
 \cite{Kr47} (cf.\ also Theorems\ 2 and 5--7 in the English summary on page 492)\footnote{We are particularly indebted to Gerd Grubb for a clarification of the necessary and sufficient nature of 
the inequalities \eqref{Fr-Sa} (resp.,\ \eqref{Res}) for $\widetilde{S}$ to be a self-adjoint extension 
of $S$.}: 

\begin{theorem}\label{T-kkrr}
Assume that $S$ is a densely defined, closed, nonnegative operator in $\cH$. Then, among all non-negative self-adjoint extensions of $S$, there exist two distinguished ones, $S_K$ and $S_F$, which are, respectively, the smallest and largest
$($in the sense of order between self-adjoint operators, cf.\ \eqref{AleqB}$)$ such extension. Furthermore, a non-negative self-adjoint operator $\widetilde{S}$ is a self-adjoint extension 
of $S$ if and only if $\widetilde{S}$ satisfies 
\begin{equation}\label{Fr-Sa}
S_K\leq\widetilde{S}\leq S_F.
\end{equation}
In particular, \eqref{Fr-Sa} determines $S_K$ and $S_F$ uniquely. 

In addition,  if $S\geq \varepsilon I_{\cH}$ for some $\varepsilon >0$, one has 
$S_F \geq \varepsilon I_{\cH}$, and 
\begin{align}
\dom (S_F) &= \dom (S) \dotplus (S_F)^{-1} \ker (S^*),     \lb{SF}  \\
\dom (S_K) & = \dom (S) \dotplus \ker (S^*),    \lb{SK}   \\
\dom (S^*) & = \dom (S) \dotplus (S_F)^{-1} \ker (S^*) \dotplus \ker (S^*)  \no \\
& = \dom (S_F) \dotplus \ker (S^*),    \lb{S*} 
\end{align}
in particular, 
\begin{equation} \label{Fr-4Tf}
\ker(S_K)= \ker\big((S_K)^{1/2}\big)= \ker(S^*) = \ran(S)^{\bot}.
\end{equation} 
\end{theorem}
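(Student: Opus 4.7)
The proof decomposes into three tasks: concrete construction of $S_F$ and $S_K$, verification of their extremal character, and derivation of the domain and kernel formulas. For the Friedrichs extension, the plan is to apply the standard form method to
\[
a_0(u,v) = (Su,v)_{\cH}, \quad u,v \in \dom(S),
\]
which is symmetric and non-negative (strictly positive with lower bound $\varepsilon$ when $S \geq \varepsilon I_{\cH}$), and closable since $S$ is closed. Its closure $a$ determines, by the first representation theorem, a unique non-negative self-adjoint operator $S_F$ extending $S$, with $S_F \geq \varepsilon I_{\cH}$ in the strictly positive case. Given any non-negative self-adjoint extension $\wti S$ of $S$, the form $(\wti S^{1/2}u,\wti S^{1/2}v)_{\cH}$ is a closed extension of $a_0$, and since $a$ is the smallest such (being the closure of $a_0$), one obtains $\dom\big(S_F^{1/2}\big) \subseteq \dom\big(\wti S^{1/2}\big)$ with matching values there; this is exactly $\wti S \leq S_F$.

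For the Krein--von Neumann extension, assume $S \geq \varepsilon I_{\cH}$, which forces $\ran(S)$ closed and hence $\cH = \ran(S) \oplus \ker(S^*)$. Define $S_K$ by
\[
\dom(S_K) := \dom(S) \dotplus \ker(S^*), \quad S_K(u+k) := Su, \quad u \in \dom(S),\; k \in \ker(S^*).
\]
The direct sum is well-defined because $w \in \dom(S) \cap \ker(S^*)$ satisfies $\varepsilon \|w\|_{\cH}^2 \leq (Sw,w)_{\cH} = (w,S^*w)_{\cH} = 0$. Symmetry and non-negativity follow from $(Su,k)_{\cH} = (u,S^*k)_{\cH} = 0$. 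Self-adjointness is verified by computing $\dom(S_K^*)$: testing $(S_K x, v)_{\cH} = (x, S_K^* v)_{\cH}$ against $x = u \in \dom(S)$ forces $v \in \dom(S^*)$ with $S_K^* v = S^* v$; testing against $x = k \in \ker(S^*)$ forces $S^* v \in \ran(S)$; writing $S^* v = Su$ with $u \in \dom(S)$ yields $v - u \in \ker(S^*)$, hence $v \in \dom(S_K)$, so $S_K^* = S_K$. The minimality $S_K \leq \wti S$ for every non-negative self-adjoint extension $\wti S$ is the deepest ingredient, and the main obstacle; it is the substantive content of Krein's 1947 theorem, and I would prove it via the order characterization $A \leq B \Leftrightarrow (B + \mu)^{-1} \leq (A + \mu)^{-1}$ for all $\mu > 0$, combined with an explicit description of resolvents of non-negative self-adjoint extensions in terms of $(S_F)^{-1}$ and auxiliary non-negative operators on $\ker(S^*)$, in which parametrization $S_K$ corresponds to the null operator.

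With $S_F$ and $S_K$ constructed, the remaining formulas are bookkeeping. Identity \eqref{SK} is the definition of $S_K$. For \eqref{SF}, the bounded operator $(S_F)^{-1}$ sends $\ran(S)$ onto $\dom(S)$ (since $S_F$ extends $S$), so applying $(S_F)^{-1}$ to $\cH = \ran(S) \oplus \ker(S^*)$ gives $\dom(S_F) = \dom(S) \dotplus (S_F)^{-1} \ker(S^*)$, directness following by applying $S_F$ and using $\ran(S) \cap \ker(S^*) = \{0\}$. For \eqref{S*}, given $v \in \dom(S^*)$ set $w := (S_F)^{-1} S^* v \in \dom(S_F)$; then $S^*(v - w) = S^* v - S_F w = 0$, so $v - w \in \ker(S^*)$ and $v \in \dom(S_F) \dotplus \ker(S^*)$; substituting \eqref{SF} yields the triple decomposition, with directness obtained from $\ran(S) \cap \ker(S^*) = \{0\}$ after applying $S^*$ to a vanishing sum. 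Finally for \eqref{Fr-4Tf}: $S_K \subseteq S^*$ gives $\ker(S_K) \subseteq \ker(S^*)$, while $S_K k = 0$ for $k \in \ker(S^*)$ gives the reverse inclusion; $\ker(S_K) = \ker\big(S_K^{1/2}\big)$ is standard for non-negative self-adjoint operators, and $\ker(S^*) = \ran(S)^{\bot}$ is elementary.
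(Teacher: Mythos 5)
The paper itself does not prove Theorem~\ref{T-kkrr}; it cites Krein's 1947 paper, so there is no internal proof to compare against. Judged on its own terms, your bookkeeping for \eqref{SF}, \eqref{SK}, \eqref{S*}, and \eqref{Fr-4Tf} is correct, your form-method construction of $S_F$ together with the maximality argument (closure of $a_0$ being the smallest closed extension) is standard and sound, and your direct construction and self-adjointness verification of $S_K$ under $S\geq\varepsilon I_{\cH}$ are fine. But there are two genuine gaps, and they are precisely where the substance of Krein's theorem lies.

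First, the minimality $S_K\leq\widetilde S$ is not proved: you explicitly flag it as ``the main obstacle'' and offer only a one-sentence programme (resolvent parametrization by auxiliary operators on $\ker(S^*)$, with $S_K$ the null-operator point). That programme is the right shape --- it is essentially the Krein--Vi\u sik--Birman picture --- but none of it is carried out, and without it you have established only that $S_F$ is maximal, not that $S_K$ is minimal. Second, the ``if'' direction of \eqref{Fr-Sa} is missing entirely: you never show that a non-negative self-adjoint $\widetilde S$ sandwiched between $S_K$ and $S_F$ must extend $S$. This converse is not a formality (the paper's footnote acknowledges the subtlety and credits Grubb with a clarification), and a proof would need something like: from $\widetilde S\leq S_F$ deduce $\dom\big(S_F^{1/2}\big)\subseteq\dom\big(\widetilde S^{1/2}\big)$ with form inequality, from $S_K\leq\widetilde S$ deduce the reverse resolvent inequality, and combine these with the structure of $\dom(S^*)$ to show $S\subseteq\widetilde S$. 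Finally, a smaller point: your construction of $S_K$ via $\dom(S)\dotplus\ker(S^*)$ needs $\ran(S)$ closed and so is tied to $S\geq\varepsilon I_{\cH}$, whereas the existence and extremality statements in the theorem are asserted for general non-negative $S$; for that generality one needs either the Ando--Nishio description \eqref{Fr-2X} or Krein's original form-theoretic construction of $S_K$.
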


We also note that
\begin{align}
S_F u & = S^* u, \quad u \in \dom(S_F),   \\
S_K v & = S^* v, \quad v \in \dom(S_K). 
\end{align}

Here the operator inequalities in \eqref{Fr-Sa} are understood in the sense of \eqref{AleqB} and they can  equivalently be written as
\begin{equation}
(S_F + a I_{\cH})^{-1} \le \big(\wti S + a I_{\cH}\big)^{-1} \le (S_K + a I_{\cH})^{-1} 
\, \text{ for some (and hence for all) $a > 0$.}    \lb{Res}
\end{equation}

For classical references on the subject of self-adjoint extensions of semibounded operators (not necessarily restricted to the Krein--von Neumann extension) we refer to Birman \cite{Bi56}, \cite{Bi08}, Friedrichs \cite{Fr34}, Freudenthal \cite{Fr36}, Grubb \cite{Gr68}, \cite{Gr70}, Krein \cite{Kr47a}, 
{\u S}traus \cite{St73}, and Vi{\u s}ik \cite{Vi63} (see also the monographs by Akhiezer and Glazman 
\cite[Sect. 109]{AG81a}, Faris \cite[Part III]{Fa75}, Fukushima, Oshima, and Takeda 
\cite[Sect.\ 3.3]{FOT94}, and the recent book by Grubb \cite[Sect.\ 13.2]{Gr09}).  

We will call the operator $S_K$ the {\it Krein--von Neumann extension}
of $S$. See \cite{Kr47} and also the discussion in \cite{AS80} and \cite{AN70}. It should be noted that the Krein--von Neumann extension was first considered by von Neumann 
\cite{Ne29} in 1929 in the case where $S$ is strictly bounded from below, that is, if 
$S \geq \varepsilon I_{\cH}$ for some $\varepsilon >0$. (His construction appears in the proof of Theorem 42 on pages 102--103.) However, von Neumann did not isolate the extremal property of this extension as described in \eqref{Fr-Sa} and \eqref{Res}. 
M.\ Krein \cite{Kr47}, \cite{Kr47a} was the first to systematically treat the general case $S\geq 0$ and to study all nonnegative self-adjoint extensions of $S$, illustrating the special role of the {\it Friedrichs extension} (i.e., the ``hard'' extension) $S_F$ of $S$ and the Krein--von Neumann (i.e., the ``soft'') extension $S_K$ of $S$ as extremal cases when considering all nonnegative extensions of $S$. For a recent exhaustive treatment of self-adjoint extensions of semibounded operators we refer to 
\cite{AT02}--\cite{AT09}, \cite{DM91}, \cite{DM95}, \cite{HMD04}. 

For convenience of the reader we also mention the following intrinsic description of the Friedrichs extension $S_F$ of $S\geq 0$ ($S$ densely defined and closed in $\cH$) due to Freudenthal \cite{Fr36}, 
\begin{align}
& S_F u:=S^*u,   \no \\
& u \in \dom(S_F):=\big\{v\in\dom(S^*)\,\big|\,  \mbox{there exists} \, 
\{v_j\}_{j\in\bbN}\subset \dom(S),    \label{Fr-2} \\
& \quad \mbox{with} \, \lim_{j\to\infty}\|v_j-v\|_{\cH}=0  
\mbox{ and } ((v_j-v_k),S(v_j-v_k))_\cH\to 0 \mbox{ as }  j,k\to\infty\big\},   \no 
\end{align}
and an intrinsic description of the Krein--von Neumann extension $S_K$ of $S\geq 0$ due to Ando and Nishio \cite{AN70}, 
\begin{align} 
& S_Ku:=S^*u,   \no \\
& u \in \dom(S_K):=\big\{v\in\dom(S^*)\,\big|\,\mbox{there exists} \, 
\{v_j\}_{j\in\bbN}\subset \dom(S),    \label{Fr-2X}  \\ 
& \quad \mbox{with} \, \lim_{j\to\infty} \|Sv_j-S^*v\|_{\cH}=0  
\mbox{ and } ((v_j-v_k),S(v_j-v_k))_\cH\to 0 \mbox{ as } j,k\to\infty\big\}.  \no
\end{align}

Throughout the rest of this paper we make the following assumptions: 

\begin{hypothesis}  \lb{h2.2}
Suppose that $S$ is a densely defined, symmetric, closed operator 
with nonzero deficiency indices in $\cH$ that satisfies 
\begin{equation}
S\geq \varepsilon I_{\cH} \, \text{ for some $\varepsilon >0$.}    \lb{3.1}
\end{equation}
\end{hypothesis}

We recall that the {\it reduced Krein--von Neumann operator} 
$\hatt S_K$ in the Hilbert space $\hatt \cH$ (cf.\ \eqref{Fr-4Tf}), 
\begin{equation}
\hatt \cH = [\ker (S^*)]^{\bot} = \big[I_{\cH} - P_{\ker(S^*)}\big] \cH 
= \big[I_{\cH} - P_{\ker(S_K)}\big] \cH = [\ker (S_K)]^{\bot},    \lb{hattH}
\end{equation}
is given by 
\begin{align} \label{Barr-4}
\hatt S_K:&=S_K|_{[\ker(S_K)]^{\bot}} \\
\begin{split} 
& = S_K[I_{\cH} - P_{\ker(S_K)}]  \, \text{ in $\hatt \cH$}    \lb{SKP} \\
&= [I_{\cH} - P_{\ker(S_K)}]S_K[I_{\cH} - P_{\ker(S_K)}] 
\, \text{ in $\hatt \cH$},  
\end{split}
\end{align} 
where $P_{\cM}$ denotes the orthogonal projection onto the subspace 
$\cM \subset \cH$, and we are alluding to the orthogonal direct sum decomposition of 
$\cH$ into 
\begin{equation}
\cH = P_{\ker(S_K)}\cH \oplus \hatt \cH = \ker(S_K) \oplus [\ker(S_K)]^\bot.
\end{equation}

We continue with the following elementary observation: 

\begin{lemma}  \lb{l3.1a}
Assume Hypothesis \ref{h2.2} and let $v\in\dom(S_K)$. Then the decomposition, 
$\dom(S_K)= \dom(S) \dotplus \ker(S^*)$ $($cf.\ \eqref{SK}$)$, leads to the following decomposition of $v$, 
\begin{equation}
v= (S_F)^{-1} S_K v + w, \, \text{ where $(S_F)^{-1} S_K v \in \dom(S)$ and 
$w \in \ker(S^*)$.}     \lb{3.1aa}
\end{equation}
As a consequence,
\begin{equation}
\big(\hatt S_K\big)^{-1} = [I_{\cH} - P_{\ker(S_K)}] (S_F)^{-1} [I_{\cH} - P_{\ker(S_K)}].   
\lb{SKinv}
\end{equation}
\end{lemma}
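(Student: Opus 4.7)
The plan is to establish the pointwise decomposition \eqref{3.1aa} first, and then to derive the operator identity \eqref{SKinv} from it by sandwiching with the projection $[I_\cH-P_{\ker(S_K)}]$.

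For \eqref{3.1aa}, I would invoke the algebraic direct-sum decomposition \eqref{SK}, writing any $v\in\dom(S_K)$ uniquely as $v=v_S+w$ with $v_S\in\dom(S)$ and $w\in\ker(S^*)$. Because $S_K\subset S^*$ while $S^*$ restricts to $S$ on $\dom(S)$ and annihilates $\ker(S^*)$, this yields $S_K v=Sv_S$. Since $\dom(S)\subset\dom(S_F)$ and $S_F|_{\dom(S)}=S$, the equation $S_F v_S=S_K v$ holds; Hypothesis \ref{h2.2} and Theorem \ref{T-kkrr} give $S_F\geq\varepsilon I_\cH$, so $(S_F)^{-1}\in\cB(\cH)$, and thus $v_S=(S_F)^{-1}S_K v$, which is precisely \eqref{3.1aa}.

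Applying $[I_\cH-P_{\ker(S_K)}]$ to \eqref{3.1aa} and using $\ker(S_K)=\ker(S^*)$ from \eqref{Fr-4Tf} kills the $w$-summand and gives
\begin{equation*}
[I_\cH-P_{\ker(S_K)}]v=[I_\cH-P_{\ker(S_K)}](S_F)^{-1}S_K v,\quad v\in\dom(S_K).
\end{equation*}
Restricting to $v\in\dom(\hatt S_K)=\dom(S_K)\cap\hatt\cH$, the left-hand side is $v$, and self-adjointness of $S_K$ together with $\dom(\hatt S_K)\subset\hatt\cH$ forces $S_K v=\hatt S_K v\in\hatt\cH$, so $S_K v=[I_\cH-P_{\ker(S_K)}]\hatt S_K v$. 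Setting $f=\hatt S_K v$ yields $(\hatt S_K)^{-1}f=[I_\cH-P_{\ker(S_K)}](S_F)^{-1}[I_\cH-P_{\ker(S_K)}]f$ for every $f\in\ran(\hatt S_K)$.

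The main point I expect to need care with is the claim that $\ran(\hatt S_K)=\hatt\cH$, so that \eqref{SKinv} genuinely holds as an operator identity on all of $\hatt\cH$ (the right-hand side is manifestly bounded on $\cH$). To handle this I would exploit Hypothesis \ref{h2.2}: the bound $S\geq\varepsilon I_\cH$ together with the closedness of $S$ gives $\|Su\|_\cH\geq\varepsilon\|u\|_\cH$, so $\ran(S)$ is closed, and since $\overline{\ran(S)}=[\ker(S^*)]^\bot=\hatt\cH$, we obtain $\ran(S)=\hatt\cH$. Given $f\in\hatt\cH$, I would pick $u_0\in\dom(S)$ with $Su_0=f$ and set $v:=[I_\cH-P_{\ker(S_K)}]u_0$; then $v\in\dom(S_K)\cap\hatt\cH$ and $\hatt S_K v=Su_0-S_K P_{\ker(S_K)}u_0=f$, closing the argument. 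Beyond this, the proof is essentially projection bookkeeping.
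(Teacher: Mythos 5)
Your proof is correct and follows essentially the same approach as the paper: establish \eqref{3.1aa} via the direct-sum decomposition \eqref{SK} and the fact that $S_F$ extends $S$, then sandwich with $I_{\cH}-P_{\ker(S_K)}$ and invert $\hatt S_K$ to obtain \eqref{SKinv}. You additionally spell out why $\ran\big(\hatt S_K\big)=\hatt\cH$ (via the closed range of $S$ under $S\geq\varepsilon I_{\cH}$), a step the paper's proof simply asserts.
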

\begin{proof}
Let $v = u + w$, with $u \in \dom(S)$ and $w \in \ker(S^*)$. Then
\begin{align}
v & = u + w = (S_F)^{-1} S_F u + w = (S_F)^{-1} S u + w   \no \\
& = (S_F)^{-1} S_K u +w = (S_F)^{-1} S_K (u + w) +w   \no \\
& = (S_F)^{-1} S_K v +w     \lb{3.1a}
\end{align}
proves \eqref{3.1aa}. Given $v\in\dom(S_K)$, one infers 
\begin{equation}
S_K v = S_K (P_{\ker(S_K)} + P_{\hatt\cH})v = S_K P_{\hatt\cH} v,
\end{equation}
since $S_K P_{\ker(S_K)}=0$. In particular,
\begin{equation}
P_{\hatt \cH} v \in \dom (S_K) \, \text{ whenever } \, v\in\dom(S_K).
\end{equation} 
Applying $P_{\hatt \cH}$ to \eqref{3.1aa} then yields 
\begin{align}
P_{\hatt \cH} v & = P_{\hatt \cH}  (S_F)^{-1} S_K [P_{\hatt \cH} + P_{\ker(S_K)}] v 
= P_{\hatt \cH} (S_F)^{-1} S_K P_{\hatt \cH} v 
= P_{\hatt \cH} (S_F)^{-1} \hatt S_K P_{\hatt \cH} v \no \\
& = P_{\hatt \cH} (S_F)^{-1} P_{\hatt \cH} \hatt S_K P_{\hatt \cH} v, 
\quad v\in\dom(S_K).  
\end{align}
Thus,
\begin{equation}
\big(\hatt S_K\big)^{-1} \big( \hatt S_K P_{\hatt \cH} v\big) = P_{\hatt \cH} (S_F)^{-1} P_{\hatt \cH} \big(\hatt S_K P_{\hatt \cH} v\big), \quad v\in\dom(S_K).  \lb{3.1bb}
\end{equation}
Since $\ran\big(\hatt S_K\big) = \hatt \cH$, \eqref{3.1bb} proves \eqref{SKinv}. 
\end{proof}

We note that equation \eqref{SKinv} was proved by Krein in his seminal paper 
\cite{Kr47} (cf.\ the proof of Theorem\ 26 in \cite{Kr47}). For a different proof of Krein's formula 
\eqref{SKinv} and its generalization to the case of non-negative operators, see also 
\cite[Corollary 5]{Ma92}.

Next, we consider a self-adjoint operator
\begin{equation} \label{Barr-1}
T:\dom(T)\subseteq \cH\to\cH,\quad T=T^*,
\end{equation} 
which is bounded from below, that is, there exists $\alpha\in\bbR$ such that
\begin{equation} \label{Barr-2}
T\geq \alpha I_{\cH}.
\end{equation} 
We denote by $\{E_T(\lambda)\}_{\lambda\in\bbR}$ the family of strongly right-continuous spectral projections of $T$, and introduce, as usual, $E_T((a,b))=E_T(b_-) - E_T(a)$, 
$E_T(b_-) = \slim_{\varepsilon\downarrow 0}E_T(b-\varepsilon)$, $-\infty \leq a < b$. In addition, we set 
\begin{equation} \label{Barr-3}
\mu_{T,j}:=\inf\,\bigl\{\lambda\in\bbR\,|\,
\dim (\ran (E_T((-\infty,\lambda)))) \geq j\bigr\},\quad j\in\bbN.
\end{equation} 
Then, for fixed $k\in\bbN$, either: \\
$(i)$ $\mu_{T,k}$ is the $k$th eigenvalue of $T$ counting multiplicity below the bottom of the essential spectrum, $\sigma_{\rm ess}(T)$, of $T$, \\
or \\
$(ii)$ $\mu_{T,k}$ is the bottom of the essential spectrum of $T$, 
\begin{equation}
\mu_{T,k} = \inf \{\lambda \in \bbR \,|\, \lambda \in \sigma_{\rm ess}(T)\}, 
\end{equation}
and in that case $\mu_{T,k+\ell} = \mu_{T,k}$, $\ell\in\bbN$, and there are at most $k-1$ eigenvalues (counting multiplicity) of $T$ below $\mu_{T,k}$. 

We now record the following basic result: 

\begin{theorem}  \lb{tKAS}
Assume Hypothesis \ref{h2.2}. Then, 
\begin{equation}\label{Barr-5}
\varepsilon \leq \mu_{S_F,j} \leq \mu_{\hatt S_K,j}, \quad j\in\bbN.
\end{equation} 
In particular, if the Friedrichs extension $S_F$ of $S$ has purely discrete
spectrum, then, except possibly for $\lambda=0$, the Krein--von Neumann extension
$S_K$ of $S$ also has purely discrete spectrum in $(0,\infty)$, that is, 
\begin{equation}
\sigma_{\rm ess}(S_F) = \emptyset \, \text{ implies } \, 
\sigma_{\rm ess}(S_K) \backslash\{0\} = \emptyset.      \lb{ESSK}
\end{equation}
In addition, let $p\in (0,\infty)\cup\{\infty\}$, then
\begin{align}
\begin{split}
& (S_F - z_0 I_{\cH})^{-1} \in \cB_p(\cH) 
\, \text{ for some $z_0\in \bbC\backslash [\varepsilon,\infty)$}   \\ 
& \text{implies } \, 
(S_K - zI_{\cH})^{-1}[I_{\cH} - P_{\ker(S_K)}] \in \cB_p(\cH) 
 \, \text{ for all $z\in \bbC\backslash [\varepsilon,\infty)$}. 
\lb{CPK}
\end{split} 
\end{align}
In fact, the $\ell^p(\bbN)$-based trace ideals $\cB_p(\cH)$ of $\cB(\cH)$ can be replaced by any two-sided symmetrically normed ideals of $\cB(\cH)$.
\end{theorem}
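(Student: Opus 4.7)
The plan is to reduce everything to Krein's formula \eqref{SKinv}, namely $\bigl(\hatt S_K\bigr)^{-1} = P_{\hatt\cH}(S_F)^{-1}P_{\hatt\cH}$, where $P_{\hatt\cH} = I_{\cH} - P_{\ker(S_K)}$. Since Hypothesis~\ref{h2.2} together with Theorem~\ref{T-kkrr} ensures $S_F \geq \varepsilon I_{\cH}$, the operator $(S_F)^{-1}$ is bounded, nonnegative, and self-adjoint with norm at most $\varepsilon^{-1}$, and $\bigl(\hatt S_K\bigr)^{-1}$ is its compression to the closed subspace $\hatt\cH$. All three assertions of Theorem~\ref{tKAS} are extracted from this compression structure.

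For the eigenvalue inequality \eqref{Barr-5}, we invoke the min--max (Courant--Fischer--Weyl) principle for the bounded self-adjoint operators $(S_F)^{-1}$ on $\cH$ and $\bigl(\hatt S_K\bigr)^{-1}$ on $\hatt\cH$. Ordering their spectra from the top (where each counted value is either an isolated eigenvalue of finite multiplicity or the top of the essential spectrum), the key observation is that restricting the pool of admissible trial subspaces from $\cH$ to $\hatt\cH$ can only decrease the outer supremum:
\begin{equation*}
\sup_{\substack{V \subseteq \hatt\cH \\ \dim V = j}} \inf_{\substack{u \in V \\ \|u\| = 1}} \bigl(u, (S_F)^{-1} u\bigr)_{\cH} \leq \sup_{\substack{V \subseteq \cH \\ \dim V = j}} \inf_{\substack{u \in V \\ \|u\| = 1}} \bigl(u, (S_F)^{-1} u\bigr)_{\cH}, \quad j \in \bbN.
\end{equation*}
Transferring the resulting inequality back through the decreasing bijection $\lambda \mapsto 1/\lambda$ on $(0,\infty)$ gives $\mu_{S_F, j} \leq \mu_{\hatt S_K, j}$; combining with $\mu_{S_F, 1} \geq \varepsilon$ then yields \eqref{Barr-5}.

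The consequences \eqref{ESSK} and \eqref{CPK} follow from compactness and ideal-property arguments applied to the same compression formula. For \eqref{ESSK}, $\sigma_{\rm ess}(S_F) = \emptyset$ means $(S_F)^{-1}$ is compact; so is its compression $\bigl(\hatt S_K\bigr)^{-1}$, whence $\hatt S_K$ has purely discrete spectrum, and since $S_K$ is reduced by $\cH = \ker(S_K) \oplus \hatt\cH$ to $0 \oplus \hatt S_K$ we obtain $\sigma_{\rm ess}(S_K) \setminus \{0\} = \emptyset$. For \eqref{CPK}, the first resolvent identity propagates $(S_F - z_0 I_{\cH})^{-1} \in \cB_p(\cH)$ to $(S_F - z I_{\cH})^{-1} \in \cB_p(\cH)$ for every $z \in \rho(S_F)$, and in particular to $(S_F)^{-1} \in \cB_p(\cH)$ since $0 \in \rho(S_F)$. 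The ideal property of $\cB_p$ then gives $\bigl(\hatt S_K\bigr)^{-1} = P_{\hatt\cH}(S_F)^{-1}P_{\hatt\cH} \in \cB_p(\cH)$, and a further resolvent-identity step on $\hatt\cH$, using $\hatt S_K \geq \varepsilon I_{\hatt\cH}$, upgrades this to $\bigl(\hatt S_K - z I_{\hatt\cH}\bigr)^{-1} \in \cB_p(\hatt\cH)$ for every $z \in \bbC \setminus [\varepsilon, \infty)$; extended by zero on $\ker(S_K)$ this operator is precisely $(S_K - z I_{\cH})^{-1}[I_{\cH} - P_{\ker(S_K)}]$. The generalization to an arbitrary two-sided symmetrically normed ideal is verbatim, since only the ideal property and resolvent arithmetic are used. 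The main point of care will be the bookkeeping with $\mu_{T, j}$ in the min--max step, where one must verify that the ``isolated eigenvalue or bottom of essential spectrum'' convention transforms cleanly under $\lambda \mapsto 1/\lambda$, so that compression inequalities on the top of $\bigl(\hatt S_K\bigr)^{-1}$ and $(S_F)^{-1}$ yield the desired inequality on the bottom of $\hatt S_K$ and $S_F$.
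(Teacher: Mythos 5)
Your proof is correct and follows essentially the same route as the paper: you reduce everything to Krein's compression formula \eqref{SKinv}, establish \eqref{Barr-5} by comparing the maximin characterizations of $(S_F)^{-1}$ over subspaces of $\cH$ versus subspaces of $\hatt\cH$ (which is exactly the paper's argument), and obtain \eqref{ESSK} and \eqref{CPK} from the ideal property of $\cJ(\cH)$ under the bounded compression $P_{\hatt\cH}(\cdot)P_{\hatt\cH}$ together with the first resolvent identity. The only stylistic difference is that you spell out the passage $\lambda\mapsto 1/\lambda$ and the block decomposition of $(S_K - zI_{\cH})^{-1}[I_{\cH}-P_{\ker(S_K)}]$ more explicitly, which the paper leaves implicit.
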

\begin{proof}
Denote by $\cM_j$ subspaces of $\cH$ of dimension $j\in\bbN$, and similarly, $\hatt \cM_j$ subspaces of $\hatt \cH$ of dimension $j\in\bbN$. Then the  inequalities 
\eqref{Barr-5} follow from $S_F \geq \varepsilon I_{\cH}$, \eqref{SKinv}, and the minimax (better, maximin) theorem as follows: First we note that (cf., e.g., 
\cite[Theorem\ 5.28]{GS06}, \cite[Sect.\ 32]{He86})
\begin{equation}
\f{1}{\mu_{S_F,j}} = \sup_{\cM_j \subset \cH}
\min_{\substack{u \in \cM_j\\ \|u\|_{\cH}=1}} \big(u, (S_F)^{-1} u\big)_{\cH}, \quad 
j\in\bbN.
\end{equation}
As a consequence, 
\begin{equation}
\f{1}{\mu_{S_F,j}} \geq 
\min_{u \in \cM_j \subset \cH} \big(u, (S_F)^{-1} u\big)_{\cH}, \quad j\in\bbN, 
\end{equation}
for any subspace $\cM_j$ of $\cH$ of dimension $j\in\bbN$. In particular, 
\begin{align}
\f{1}{\mu_{S_F,j}} &\geq 
\min_{\substack{v \in \hatt \cM_j \subset \hatt\cH\\ \|v\|_{\hatt \cH}=1}} 
\big(v, (S_F)^{-1} v\big)_{\hatt \cH}  \no \\
& = \min_{\substack{v \in \hatt \cM_j \subset \hatt\cH\\ \|v\|_{\hatt \cH}=1}} 
\big(v, P_{\hatt \cH} (S_F)^{-1} P_{\hatt \cH}v\big)_{\hatt \cH},  \quad j\in\bbN, 
\end{align}
for any subspace $\hatt \cM_j$ of $\hatt \cH$ of dimension $j\in\bbN$. Thus, one concludes 
\begin{align}
\f{1}{\mu_{S_F,j}} 
& \geq  \sup_{\hatt \cM_j \subset \hatt \cH} \; 
\min_{\substack{v \in \hatt \cM_j\\ \|v\|_{\hatt \cH}=1}} 
\big(v, P_{\hatt \cH} (S_F)^{-1} P_{\hatt \cH}v\big)_{\hatt \cH}    \no \\ 
& = \sup_{\hatt \cM_j \subset \hatt \cH} \; 
\min_{\substack{v \in \hatt \cM_j\\ \|v\|_{\hatt \cH}=1}} 
\big(v, \big(\hatt S_K\big)^{-1} v\big)_{\hatt \cH}    \no \\ 
& = \f{1}{\mu_{\hatt S_K,j}}, \quad j\in\bbN. 
\end{align}
Next, let $\cJ(\cH)$ be a two-sided symmetrically normed ideal of $\cB(\cH)$. Temporarily, we will identify operators of the type $P_{\hatt\cH} TP_{\hatt\cH}$ in 
$\hatt \cH$ for $T \in \cB(\cH)$, with $2 \times 2$ block operators of the type
\begin{equation}
\begin{pmatrix} 0 & 0 \\ 0 & P_{\hatt\cH} TP_{\hatt\cH}|_{\hatt \cH} \end{pmatrix}  
\, \text{ in } \, \cH = (\ker(S_K))^{\bot} \oplus \hatt \cH.
\end{equation}  
By \eqref{SKinv}, and since $P_{\hatt\cH}$ is bounded, one concludes that  
$(S_F)^{-1} \in \cJ(\cH)$ implies 
$\big(\hatt S_K)^{-1} = \nlim_{z\to 0}(S_K - zI_{\cH})^{-1}[I_{\cH} - P_{\ker(S_K)}] \in \cJ(\cH)$. The (first) resolvent equation applied to $S_F$, and subsequently, applied to $S_K$, then proves \eqref{CPK}.
\end{proof}

We note that \eqref{ESSK} is a classical result of Krein \cite{Kr47}, the more general fact \eqref{Barr-5} has not been mentioned explicitly in Krein's paper \cite{Kr47}, although it immediately follows from the minimax principle and Krein's formula \eqref{SKinv}. On the other hand, in the special case 
${\rm def}(S)<\infty$, Krein states an extension of \eqref{Barr-5} in his Remark 8.1 in the sense that he also considers self-adjoint extensions different from the Krein extension. Apparently, \eqref{Barr-5} has first been proven by Alonso and Simon \cite{AS80} by a somewhat different method. 

Concluding this section, we point out that a great variety of additional results 
for the Krein--von Neumann extension can be found in the very extensive list of 
references in \cite{AT09}, \cite{AGMT09}, and \cite{HMD04}.

\section{The Krein--von Neumann Extension and its Unitary Equivalence to an Abstract Buckling Problem}
\label{s3}

In this section we prove our principal result, the unitary equivalence of the inverse of 
the Krein--von Neumann extension (on the orthogonal complement of its kernel) of a densely defined, closed, operator $S$ satisfying $S\geq \varepsilon I_{\cH}$ for some 
$\varepsilon >0$, in a complex separable Hilbert space $\cH$ to an abstract buckling problem operator.  

We start by introducing an abstract version of Proposition\ 1 in Grubb's paper  
\cite{Gr83} devoted to Krein--von Neumann extensions of even order elliptic differential operators on bounded domains:

\begin{lemma}  \lb{l3.3}
Assume Hypothesis \ref{h2.2} and let $\lambda \neq 0$. Then there exists 
$0 \neq v \in \dom(S_K)$ with
\begin{equation}
S_K v = \lambda v   \lb{3.1b}
\end{equation}
if and only if there exists $0 \neq u \in \dom(S^* S)$ such that
\begin{equation}
S^* S u = \lambda S u.   \lb{3.1c}
\end{equation}
In particular, the solutions $v$ of \eqref{3.1b} are in one-to-one correspondence with the solutions $u$ of \eqref{3.1c} given by the formulas
\begin{align}
u & = (S_F)^{-1} S_K v,    \lb{3.1d}  \\
v & = \lambda^{-1} S u.   \lb{3.1e}
\end{align}
Of course, since $S_K \geq 0$, any $\lambda \neq 0$ in \eqref{3.1b} and \eqref{3.1c}  necessarily satisfies $\lambda > 0$.
\end{lemma}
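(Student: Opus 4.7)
The plan is to exploit the decomposition $\dom(S_K) = \dom(S) \dotplus \ker(S^*)$ from \eqref{SK} together with Lemma \ref{l3.1a}, and the basic identity $S^* u = S u$ for $u \in \dom(S)$ (since $S \subseteq S^*$). The argument splits into the two implications plus a short check that the formulas \eqref{3.1d}, \eqref{3.1e} are mutually inverse.

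For the forward direction, suppose $v \in \dom(S_K)$ with $v \neq 0$ and $S_K v = \lambda v$. Set $u := (S_F)^{-1} S_K v = \lambda (S_F)^{-1} v$, which belongs to $\dom(S_F)$. By Lemma \ref{l3.1a}, the decomposition $v = u + w$ with $w \in \ker(S^*)$ has its first summand $u \in \dom(S)$, so $S u = S_F u = S_F (S_F)^{-1} S_K v = S_K v = \lambda v$. Thus $Su = \lambda v \in \dom(S^*)$ (indeed $v \in \dom(S_K) \subset \dom(S^*)$), which places $u$ in $\dom(S^* S)$. Then
\begin{equation*}
S^* S u = S^*(\lambda v) = \lambda\, S_K v = \lambda^2 v = \lambda\, S u,
\end{equation*}
and $u \neq 0$ because $Su = \lambda v \neq 0$. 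Moreover $v = \lambda^{-1} S u$ is automatic from $Su = \lambda v$.

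For the reverse direction, let $u \in \dom(S^* S)$ with $u \neq 0$ satisfy $S^* S u = \lambda S u$, and set $v := \lambda^{-1} S u$. Since $u \in \dom(S^*S)$, $Su \in \dom(S^*)$, so $v \in \dom(S^*)$, and
\begin{equation*}
S^* v = \lambda^{-1} S^* S u = S u = \lambda v.
\end{equation*}
To place $v$ in $\dom(S_K)$, write $v = u + (v - u)$. Here $u \in \dom(S) \subseteq \dom(S^*)$, and using $S^* u = S u = \lambda v$ one computes $S^*(v - u) = \lambda v - \lambda v = 0$, so $v - u \in \ker(S^*)$. By \eqref{SK}, $v \in \dom(S_K)$, and \eqref{Fr-4Tf} with $S_K \subseteq S^*$ yields $S_K v = S^* v = \lambda v$. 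Injectivity of $S$ (forced by $S \geq \varepsilon I_\cH$) ensures $v \neq 0$.

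Finally, to verify the bijection, observe that if one starts with $u$ and forms $v = \lambda^{-1} S u$, then $(S_F)^{-1} S_K v = \lambda (S_F)^{-1} v = (S_F)^{-1} S u = u$ because $u \in \dom(S)$ gives $S_F u = S u$; conversely, starting with $v$ and forming $u = (S_F)^{-1} S_K v$, the relation $S u = \lambda v$ obtained in the forward step shows $\lambda^{-1} S u = v$. The positivity statement $\lambda > 0$ is immediate from $S_K \geq 0$ and $\lambda \neq 0$. No step looks technically hard; the one place to be careful is verifying that the summand $u = (S_F)^{-1} S_K v$ in the decomposition of $v$ really lies in $\dom(S)$ (not just in $\dom(S_F)$), which is exactly what Lemma \ref{l3.1a} provides via \eqref{3.1aa}.
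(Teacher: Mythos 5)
Your proof is correct and follows essentially the same route as the paper's: both directions rest on the decomposition $\dom(S_K)=\dom(S)\dotplus\ker(S^*)$ from \eqref{SK}, with Lemma \ref{l3.1a} identifying the $\dom(S)$-component as $(S_F)^{-1}S_K v$. The only cosmetic difference is in the forward step, where you compute $S^*Su = S^*(\lambda v) = \lambda S_K v = \lambda^2 v = \lambda S u$ directly from $v\in\dom(S_K)$, whereas the paper applies $S^*$ to the relation $\lambda w = Su-\lambda u\in\ker(S^*)$; both are equally short and establish the same fact, including the implicit membership $Su\in\dom(S^*)$.
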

\begin{proof}
Let $S_K v = \lambda v$, $v\in\dom(S_K)$, $\lambda \neq 0$, and $v = u + w$, with 
$u \in \dom(S)$ and $w \in \ker(S^*)$. Then, 
\begin{equation}
S_K v = \lambda v 
\Longleftrightarrow v = \lambda^{-1} S_K v = \lambda^{-1} S_K u = \lambda^{-1} S u.
\end{equation}
Moreover, $u =0$ implies $v = 0$ and clearly $v=0$ implies $u=w=0$, hence $v \neq 0$ if and only if $u \neq 0$. In addition, $u = (S_F)^{-1} S_K v$ by \eqref{3.1aa}. Finally,
\begin{align}
\begin{split}
& \lambda w = S u - \lambda u \in \ker(S^*) \, \text{ implies}    \\ 
& 0 = \lambda S^* w = S^*(S u - \lambda u) = S^* S u - \lambda S^* u 
= S^* S u - \lambda S u. 
\end{split}
\end{align}
Conversely, suppose $u \in \dom(S^* S)$ and $S^* S u = \lambda S u$, 
$\lambda \neq 0$. Introducing $v = \lambda^{-1} S u$, then $v \in \dom(S^*)$ and 
\begin{equation}
S^* v = \lambda^{-1} S^* S u = S u = \lambda v.
\end{equation}
Noticing that
\begin{equation}
S^* S u = \lambda S u = \lambda S^* u \, \text{ implies } \, S^*(S-\lambda I_{\cH}) u =0, 
\end{equation}
and hence $(S - \lambda I_{\cH}) u \in \ker(S^*)$, rewriting $v$ as
\begin{equation}
v = u + \lambda^{-1} (S - \lambda I_{\cH}) u
\end{equation}
then proves that also $v \in \dom(S_K)$, using \eqref{SK} again.
\end{proof}

Due to Example \ref{e3.6} and Remark \ref{r3.7} at the end of this section, we will call the linear pencil eigenvalue problem $S^* Su = \lambda S u$ in \eqref{3.1c} the {\it abstract buckling problem} associated with the Krein--von Neumann extension $S_K$ of $S$.

Next, we turn to a variational formulation of the correspondence between the inverse of the reduced Krein extension $\hatt S_K$ and the abstract buckling problem in terms of appropriate sesquilinear forms by following the treatment of Kozlov 
\cite{Ko79}--\cite{Ko84} in the context of elliptic partial differential operators. This will then lead to an even stronger connection between the Krein--von Neumann extension $S_K$ of $S$ and the associated abstract buckling eigenvalue problem \eqref{3.1c}, culminating in a unitary equivalence result in Theorem \ref{t3.3}. 

Given the operator $S$, we introduce the following sesquilinear forms in $\cH$,
\begin{align}
a(u,v) & = (Su,Sv)_{\cH}, \quad u, v \in \dom(a) = \dom(S),    \lb{3.2} \\
b(u,v) & = (u,Sv)_{\cH}, \quad u, v \in  \dom(b) = \dom(S).    \lb{3.3} 
\end{align}
Then $S$ being densely defined and closed implies that the sesquilinear form $a$ shares these properties and  \eqref{3.1} implies its boundedness from below,
\begin{equation}
a(u,u) \geq \varepsilon^2 \|u\|_{\cH}^2, \quad u \in \dom(S).    \lb{3.4}
\end{equation} 
Thus, one can introduce the Hilbert space 
$\cW=(\dom(S), (\cdot,\cdot)_{\cW})$ with associated scalar product 
\begin{equation}
(u,v)_{\cW}=a(u,v) = (Su,Sv)_{\cH}, \quad u, v \in \dom(S).   \lb{3.5}
\end{equation}
In addition, we denote by $\iota_{\cW}$ the continuous embedding operator of $\cW$ 
into $\cH$,
\begin{equation}
\iota_{\cW} : \cW \hookrightarrow \cH.    \lb{3.6}
\end{equation}
Hence we will use the notation
\begin{equation}
(w_1,w_2)_{\cW} =a(\iota_{\cW} w_1,\iota_{\cW} w_2) 
= (S\iota_{\cW} w_1, S\iota_{\cW} w_2)_{\cH}, \quad w_1, w_2 \in \cW,   \lb{3.7}
\end{equation}
in the following. 

Given the sesquilinear forms $a$ and $b$ and the Hilbert space $\cW$, we next define the operator $T$ in $\cW$ by
\begin{align}
\begin{split}
(w_1,T w_2)_{\cW} & = a(\iota_{\cW} w_1,\iota_{\cW} T w_2) 
= (S \iota_{\cW} w_1,S\iota_{\cW} T w_2)_{\cH}   \\
& = b(\iota_{\cW} w_1,\iota_{\cW} w_2) = (\iota_{\cW} w_1,S \iota_{\cW} w_2)_{\cH},
\quad w_1, w_2 \in \cW.    \lb{3.8}
\end{split}
\end{align}
(In contrast to the informality of our introduction, we now explicitly write the embedding operator $\iota_{\cW}$.) One verifies that $T$ is well-defined and that 
\begin{equation}
|(w_1,T w_2)_{\cW}| \leq \|\iota_{\cW} w_1\|_{\cH} \|S \iota_{\cW} w_2\|_{\cH} 
\leq \varepsilon^{-1} \|w_1\|_{\cW} \|w_2\|_{\cW}, \quad w_1, w_2 \in \cW,   \lb{3.9}
\end{equation}
and hence that
\begin{equation}
0 \leq T = T^* \in \cB(\cW), \quad \|T\|_{\cB(\cW)} \leq \varepsilon^{-1}.  \lb{3.10}
\end{equation}
For reasons to become clear at the end of this section, we will call $T$ the {\it abstract buckling problem operator} associated with the Krein--von Neumann extension $S_K$ of $S$.  

Next, recalling the notation 
$\hatt \cH = [\ker (S^*)]^{\bot} = \big[I_{\cH} - P_{\ker(S^*)}\big] \cH$ (cf.\ \eqref{hattH}), 
we introduce the operator
\begin{equation}
\hatt S: \begin{cases} \cW \to \hatt \cH,  \\
w \mapsto S \iota_{\cW} w,  \end{cases}      \lb{3.12}
\end{equation}
and note that 
\begin{equation} 
\ran\big(\hatt S\big) = \ran (S) = \hatt \cH,    \lb{3.12aa}
\end{equation}
since $S\geq \varepsilon I_{\cH}$ for some $\varepsilon > 0$ and $S$ is closed in $\cH$ 
(see, e.g., \cite[Theorem\ 5.32]{We80}). In fact, one has the following result:

\begin{lemma} \lb{l3.1} 
Assume Hypothesis \ref{h2.2}. Then 
$\hatt S\in\cB(\cW,\hatt \cH)$ maps $\cW$ unitarily onto $\hatt \cH$. 
\end{lemma}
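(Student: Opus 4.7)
The plan is to verify the two defining properties of a unitary map: that $\hatt S$ is an isometry from $\cW$ to $\hatt\cH$, and that it is a bijection between these spaces. Both should be essentially immediate from the definitions, together with the standing hypothesis $S \geq \varepsilon I_\cH$.

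First I would establish isometry by a direct one-line computation: for $w_1, w_2 \in \cW$, using the definition \eqref{3.12} of $\hatt S$ together with the definition \eqref{3.7} of the inner product on $\cW$,
\begin{equation*}
\big(\hatt S w_1, \hatt S w_2\big)_{\hatt\cH}
= (S\iota_\cW w_1, S\iota_\cW w_2)_\cH
= a(\iota_\cW w_1, \iota_\cW w_2)
= (w_1, w_2)_\cW.
\end{equation*}
(Here I am using that $\hatt\cH$ inherits its inner product from $\cH$, since it is a closed subspace.) So $\hatt S$ is an isometric embedding, and in particular bounded and injective. Injectivity can also be seen directly: if $\hatt S w = 0$, then $S\iota_\cW w = 0$, and since $S \geq \varepsilon I_\cH$ forces $\ker(S)=\{0\}$, one gets $\iota_\cW w = 0$, hence $w = 0$.

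For surjectivity I would invoke \eqref{3.12aa}: we have $\ran(\hatt S) = \ran(S)$, and since $S$ is closed with $S \geq \varepsilon I_\cH$ for some $\varepsilon > 0$, the range $\ran(S)$ is closed in $\cH$ (this is the standard fact cited via \cite[Theorem 5.32]{We80}). Combined with the orthogonal decomposition $\cH = \ker(S^*) \oplus \overline{\ran(S)}$, closedness of $\ran(S)$ yields $\ran(S) = [\ker(S^*)]^\bot = \hatt\cH$, so $\ran(\hatt S) = \hatt\cH$.

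Putting these together, $\hatt S \in \cB(\cW,\hatt\cH)$ is an isometric bijection, therefore unitary. There is no real obstacle here — the lemma is essentially a bookkeeping consequence of how $\cW$ was defined (so that $S$ becomes isometric by construction) together with closedness of $\ran(S)$ under the strict positivity hypothesis; the only point requiring a small external input is the closed-range fact for strictly positive closed operators.
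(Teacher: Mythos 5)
Your proposal is correct and matches the paper's proof essentially verbatim: isometry follows from the definition of the $\cW$-inner product, and surjectivity is exactly the observation \eqref{3.12aa} that $\ran(\hatt S)=\ran(S)=\hatt\cH$ because $S$ is closed and strictly positive. The only difference is cosmetic — you verify isometry at the level of inner products while the paper does it at the level of norms, and you spell out the closed-range reasoning that the paper compresses into the citation of \cite[Theorem~5.32]{We80}.
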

\begin{proof}
Clearly $\hatt S$ is an isometry since 
\begin{equation}
\big\|\hatt S w\big\|_{\hatt\cH} = \|S \iota_{\cW} w\big\|_{\cH} = \|w\|_{\cW}, \quad w \in \cW.   \lb{3.13}
\end{equation}
Since $\ran\big(\hatt S\big) = \hatt \cH$ by \eqref{3.12aa}, $\hatt S$ is unitary. 
\end{proof}

Next we recall the definition of the reduced Krein--von Neumann operator $\hatt S_K$ in $\hatt \cH$ defined in \eqref{SKP}, the fact that $\ker(S^*) = \ker(S_K)$ by \eqref{Fr-4Tf}, and state the following auxiliary result:

\begin{lemma} \lb{l3.2}
Assume Hypothesis \ref{h2.2}. Then the map
\begin{equation}
\big[I_{\cH} - P_{\ker(S^*)}\big] : \dom(S) \to \dom \big(\hatt S_K\big)    \lb{3.15}
\end{equation}
is a bijection. In addition, we note that 
\begin{align}
\begin{split}
& \big[I_{\cH} - P_{\ker(S^*)}\big] S_K u = S_K \big[I_{\cH} - P_{\ker(S^*)}\big] u 
= \hatt S_K \big[I_{\cH} - P_{\ker(S^*)}\big] u  \\
& \quad = \big[I_{\cH} - P_{\ker(S^*)}\big] S u = Su \in \hatt \cH,    
\quad  u \in \dom(S).     \lb{3.16} 
\end{split}
\end{align}
\end{lemma}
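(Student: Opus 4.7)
The plan is to leverage three structural facts already at our disposal: (i) the direct sum decomposition $\dom(S_K) = \dom(S) \dotplus \ker(S^*)$ from \eqref{SK}, (ii) the identification $\ker(S^*) = \ker(S_K)$ from \eqref{Fr-4Tf}, which makes $P = P_{\ker(S^*)}$ equal to $P_{\ker(S_K)}$ and $I_{\cH} - P$ the orthogonal projection onto $\hatt\cH$, and (iii) the fact that $S \subset S_K$ together with $\ran(S) = \hatt\cH$ (cf.\ \eqref{3.12aa}). Once these are in place, everything reduces to simple bookkeeping on the direct sum.

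For the bijection, I would first check well-definedness: given $u \in \dom(S)$, both $u \in \dom(S) \subset \dom(S_K)$ and $P u \in \ker(S^*) = \ker(S_K) \subset \dom(S_K)$, so $[I_{\cH} - P] u \in \dom(S_K) \cap \hatt\cH = \dom\big(\hatt S_K\big)$. Injectivity is immediate from \eqref{SK}: if $[I_{\cH} - P](u_1 - u_2) = 0$ with $u_1, u_2 \in \dom(S)$, then $u_1 - u_2 \in \ker(S^*) \cap \dom(S) = \{0\}$ by the directness of the sum. For surjectivity, take $v \in \dom\big(\hatt S_K\big) \subset \dom(S_K)$, decompose $v = u + w$ with $u \in \dom(S)$ and $w \in \ker(S^*) = \ran(P)$, and apply $I_{\cH} - P$: since $v \in \hatt\cH$ one has $[I_{\cH} - P] v = v$, while $[I_{\cH} - P] w = 0$, giving $v = [I_{\cH} - P] u$.

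For the chain of equalities \eqref{3.16}, let $u \in \dom(S)$. Because $S \subset S_K$, we have $S_K u = S u$, and since $\ran(S) \subset \hatt\cH$ we obtain $[I_{\cH} - P] S_K u = [I_{\cH} - P] S u = S u$. On the other hand, $P u \in \ker(S_K)$ implies $S_K P u = 0$, so $S_K [I_{\cH} - P] u = S_K u - S_K P u = S u$; and since $[I_{\cH} - P] u \in \dom\big(\hatt S_K\big)$ by the first part, the action of $\hatt S_K$ agrees with that of $S_K$ on this vector. All four expressions thus coincide with $S u \in \hatt\cH$.

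I do not anticipate a genuine obstacle here; the only point requiring a sliver of care is making sure that the two roles of $[I_{\cH} - P_{\ker(S^*)}]$ — namely, as the projection onto $\hatt\cH$ inside $\cH$ and as the defining projection of the reduced operator $\hatt S_K$ — are properly reconciled, and this is exactly where the identification $\ker(S^*) = \ker(S_K)$ from \eqref{Fr-4Tf} does the work.
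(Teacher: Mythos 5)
Your proof is correct and follows essentially the same route as the paper: well-definedness and injectivity from $\ker(S^*)=\ker(S_K)$ together with the directness of $\dom(S)\dotplus\ker(S^*)$, surjectivity by decomposing an element of $\dom\big(\hatt S_K\big)$ along \eqref{SK} and applying $I_{\cH}-P_{\ker(S^*)}$, and \eqref{3.16} from $S\subset S_K$, $S_KP_{\ker(S_K)}=0$, and $\ran(S)\subseteq\hatt\cH$. The only difference is cosmetic: the paper dismisses \eqref{3.16} as ``clear from'' the commutation identity \eqref{3.19}, whereas you spell out the individual equalities, which is harmless and arguably clearer.
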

\begin{proof}
Let $u\in\dom(S)$, then $\ker(S^*) = \ker(S_K)$ implies that 
$\big[I_{\cH} - P_{\ker(S^*)}\big] u \in \dom(S_K)$ and of course 
$\big[I_{\cH} - P_{\ker(S^*)}\big] u \in \dom\big(\hatt S_K\big)$. To prove injectivity of the map 
\eqref{3.15} it suffices to assume $v \in \dom(S)$ and 
$\big[I_{\cH} - P_{\ker(S^*)}\big] v =0$. Then 
$\dom(S) \ni v = P_{\ker(S^*)} v \in \ker(S^*)$ yields $v=0$ as 
$\dom(S) \cap \ker(S^*) = \{0\}$. To prove surjectivity of the map \eqref{3.15} we suppose $u \in \dom\big(\hatt S_K)$. The decomposition, $u = f +g$ with $f \in \dom(S)$ and $g \in \ker(S^*)$, then yields 
\begin{equation}
u = \big[I_{\cH} - P_{\ker(S^*)}\big] u = \big[I_{\cH} - P_{\ker(S^*)}\big] f 
\in \big[I_{\cH} - P_{\ker(S^*)}\big] \dom(S)   \lb{3.18}
\end{equation}
and hence proves surjectivity of \eqref{3.15}. 

Equation \eqref{3.16} is clear from 
\begin{equation}
S_K \big[I_{\cH} - P_{\ker(S^*)}\big] = \big[I_{\cH} - P_{\ker(S^*)}\big] S_K 
= \big[I_{\cH} - P_{\ker(S^*)}\big] S_K \big[I_{\cH} - P_{\ker(S^*)}\big].   \lb{3.19}
\end{equation} 
\end{proof}

Continuing, we briefly recall the polar decomposition of $S$, 
\begin{equation}
S = U_S |S|,   \lb{3.19a}
\end{equation} 
with
\begin{equation}
|S| = (S^* S)^{1/2} \geq \varepsilon I_{\cH}, \; \varepsilon > 0, \quad 
U_S \in \cB\big(\cH,\hatt \cH\big) \, \text{ is unitary.}    \lb{3.19b}
\end{equation}

At this point we are in position to state our principal unitary equivalence result:

\begin{theorem} \lb{t3.3}
Assume Hypothesis \ref{h2.2}. Then the inverse of the reduced Krein--von Neumann extension $\hatt S_K$ in $\hatt \cH = \big[I_{\cH} - P_{\ker(S^*)}\big] \cH$ and the abstract buckling problem operator $T$ in $\cW$ are unitarily equivalent, in particular,
\begin{equation}
\big(\hatt S_K\big)^{-1} = \hatt S T (\hatt S)^{-1}.    \lb{3.20}
\end{equation}
Moreover, one has
\begin{equation}
\big(\hatt S_K\big)^{-1} = U_S \big[|S|^{-1} S |S|^{-1}\big] (U_S)^{-1},    \lb{3.20a}
\end{equation}
where $U_S\in \cB\big(\cH,\hatt \cH\big)$ is the unitary operator in the polar decomposition \eqref{3.19a} of $S$ and the operator $|S|^{-1} S |S|^{-1}\in\cB(\cH)$ is self-adjoint in $\cH$. 
\end{theorem}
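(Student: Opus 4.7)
The plan is to establish \eqref{3.20} directly by unraveling the variational definition \eqref{3.8} of $T$ on a test vector, and then obtain \eqref{3.20a} by factoring the unitary $\hatt S$ through an intermediate unitary $J:\cW\to\cH$ built from $|S|$.

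For \eqref{3.20}, fix $w\in\cW$ and set $u=\iota_{\cW}w\in\dom(S)$ and $\tilde u=\iota_{\cW}Tw\in\dom(S)$. Reading \eqref{3.8} with $h=\iota_{\cW}w_1$ ranging over $\dom(S)$ gives $(Sh,S\tilde u)_{\cH}=(h,Su)_{\cH}$, which is exactly the assertion that $S\tilde u\in\dom(S^*)$ with $S^*S\tilde u=Su$. Since $S$ is symmetric, $S^*u=Su$, and therefore $S^*(S\tilde u-u)=0$, i.e., $S\tilde u-u\in\ker(S^*)$. On the other hand, $S\tilde u\in\ran(S)=\hatt\cH=[\ker(S^*)]^{\bot}$ by \eqref{3.12aa}, so projecting onto $\hatt\cH$ yields $S\tilde u=[I_{\cH}-P_{\ker(S^*)}]u$. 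Lemma \ref{l3.2} identifies this with $(\hatt S_K)^{-1}Su$, and hence $\hatt S T w=S\tilde u=(\hatt S_K)^{-1}\hatt S w$, giving \eqref{3.20}.

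For \eqref{3.20a}, define $J:\cW\to\cH$ by $Jw=|S|\iota_{\cW}w$. Because $\dom(|S|)=\dom(S)$ and $\||S|x\|_{\cH}=\|Sx\|_{\cH}$ for all $x\in\dom(S)$, and because $|S|\geq\varepsilon I_{\cH}$ maps $\dom(S)$ bijectively onto $\cH$, the map $J$ is unitary. The polar decomposition \eqref{3.19a} rewrites $\hatt S w=S\iota_{\cW}w=U_S|S|\iota_{\cW}w=U_S J w$, so $\hatt S=U_S J$. Consequently \eqref{3.20} becomes $(\hatt S_K)^{-1}=U_S(JTJ^{-1})U_S^{-1}$, and it remains to verify $JTJ^{-1}=|S|^{-1}S|S|^{-1}$ on $\cH$. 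To this end, repeat the previous calculation with $w=J^{-1}h$ for $h\in\cH$: then $\iota_{\cW}w=|S|^{-1}h$ and $S\iota_{\cW}w=U_S h$, so the variational identity for $g=\iota_{\cW}Tw$ reads $(Sh',Sg)_{\cH}=(h',U_S h)_{\cH}$ for all $h'\in\dom(S)$, forcing $|S|^2 g=S^*Sg=U_S h$ and hence $JTJ^{-1}h=|S|g=|S|^{-1}U_S h=|S|^{-1}S|S|^{-1}h$. That $|S|^{-1}S|S|^{-1}\in\cB(\cH)$ is self-adjoint follows at once from the symmetry of $S$, since $\ran(|S|^{-1})\subseteq\dom(S)$ and $|S|^{-1}$ is self-adjoint.

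The substantive step is the identification $S\tilde u=[I_{\cH}-P_{\ker(S^*)}]u$ in the first paragraph: one must recognize that the sesquilinear definition of $T$ actually encodes the second-order equation $S^*S\tilde u=Su$, and then combine the closedness of $\ran(S)=\hatt\cH$ with Lemma \ref{l3.2} to recover the Krein resolvent on $\hatt\cH$. Once \eqref{3.20} is in place, the factorization $\hatt S=U_S J$ makes \eqref{3.20a} a short, self-contained computation.
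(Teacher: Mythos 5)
Your proof is correct, and it takes a genuinely different route from the paper's. The paper establishes \eqref{3.20} by a chain of sesquilinear-form identities: it evaluates $\big(w_1, (\hatt S)^{-1}(\hatt S_K)^{-1}\hatt S w_2\big)_{\cW}$, transports $\hatt S$ across the inner product by unitarity, uses the self-adjointness of $(\hatt S_K)^{-1}$, invokes \eqref{3.16} twice, and lands on $(w_1, Tw_2)_{\cW}$ by \eqref{3.8}; for \eqref{3.20a} it simply inserts $|S|^{-1}|S|$ into $(\iota_{\cW}w_1, S\iota_{\cW}w_2)_{\cH}$ and uses $|S|=(U_S)^*S$. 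You instead read the variational definition of $T$ as the weak formulation of the operator equation $S^*S\,\iota_{\cW}Tw = S\,\iota_{\cW}w$, then exploit $S\,\iota_{\cW}Tw\in\ran(S)=\hatt\cH$ and $\iota_{\cW}w - S\,\iota_{\cW}Tw\in\ker(S^*)$ to obtain the pointwise identity $\hatt S T w = \big[I_{\cH}-P_{\ker(S^*)}\big]\iota_{\cW}w = (\hatt S_K)^{-1}\hatt S w$ directly, via Lemma~\ref{l3.2}. Your factorization $\hatt S = U_S J$ with the unitary $J = |S|\,\iota_{\cW}$ is a clean structural shortcut to \eqref{3.20a}. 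Both arguments are correct and use the same ingredients (Lemma~\ref{l3.2}, the form definition of $T$, the polar decomposition); yours makes visible the hidden second-order equation $S^*S\tilde u = Su$ encoded in the form $b$, which ties nicely to Lemma~\ref{l3.3} and the buckling interpretation, while the paper's form-level computation avoids any appeal to $\dom(S^*S)$ and proceeds by pure algebra of bilinear forms. One small point worth stating explicitly: your step from $(Sh, S\tilde u)_\cH = (h, Su)_\cH$ for all $h\in\dom(S)$ to $S\tilde u\in\dom(S^*)$ with $S^*S\tilde u = Su$ is precisely the definition of $S^*$, and it also implicitly establishes $\tilde u\in\dom(S^*S)$ — which you never need as such but which is what makes the buckling reading exact.
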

\begin{proof}
Let $w_1, w_2 \in \cW$. Then,
\begin{align}
& \big(w_1,\big(\hatt S\big)^{-1} \big(\hatt S_K\big)^{-1} \hatt S w_2\big)_{\cW} 
= \big(\hatt S w_1, \big(\hatt S_K\big)^{-1} \hatt S w_2\big)_{\hatt \cH}  \no \\
& \quad = \big( \big(\hatt S_K\big)^{-1} \hatt S w_1, \hatt S w_2\big)_{\hatt \cH} 
= \big( \big(\hatt S_K\big)^{-1} S \iota_{\cW} w_1, \hatt S w_2\big)_{\hatt \cH}  \no \\
& \quad = \big( \big(\hatt S_K\big)^{-1} \big[I_{\cH} - P_{\ker(S^*)}\big] S \iota_{\cW} w_1, \hatt S w_2\big)_{\hatt \cH}   \quad 
 \text{by \eqref{3.16}}   \no \\
& \quad = \big( \big(\hatt S_K\big)^{-1} \hatt S_K \big[I_{\cH} - P_{\ker(S^*)}\big] 
\iota_{\cW} w_1, \hatt S w_2\big)_{\hatt \cH}  \quad 
 \text{again by \eqref{3.16}}  \no \\
&  \quad = \big(\big[I_{\cH} - P_{\ker(S^*)}\big] 
\iota_{\cW} w_1, \hatt S w_2\big)_{\hatt \cH}    \no \\
&  \quad = \big(\iota_{\cW} w_1, S \iota_{\cW} w_2\big)_{\cH}    \no \\
&  \quad = \big(w_1, T w_2\big)_{\cW}  \quad 
\text{by definition of $T$ in \eqref{3.8},}    \lb{3.22}
\end{align}
yields \eqref{3.20}. In addition one verifies that
\begin{align}
\big(\hatt S w_1, \big(\hatt S_K\big)^{-1} \hatt S w_2\big)_{\hatt \cH} & = 
\big(w_1, T w_2\big)_{\cW}   \no \\
& = \big(\iota_{\cW} w_1, S \iota_{\cW} w_2\big)_{\cH}    
\no \\ 
& = \big(|S|^{-1} |S| \iota_{\cW} w_1, S |S|^{-1} |S| \iota_{\cW} w_2\big)_{\cH}    \no \\
& = \big(|S| \iota_{\cW} w_1, \big[|S|^{-1} S |S|^{-1}\big] |S| \iota_{\cW} w_2\big)_{\cH}    \no \\
& = \big((U_S)^* S \iota_{\cW} w_1, \big[|S|^{-1} S |S|^{-1}\big] (U_S)^* S \iota_{\cW} w_2\big)_{\cH}    \no \\
& = \big(S \iota_{\cW} w_1, U_S \big[|S|^{-1} S |S|^{-1}\big] (U_S)^* S \iota_{\cW} w_2\big)_{\cH}    \no \\
& = \big(\hatt S w_1, U_S 
\big[|S|^{-1} S |S|^{-1}\big] (U_S)^* \hatt S w_2\big)_{\hatt \cH} \, ,    \lb{3.37}
\end{align}
where we used $|S|=(U_S)^* S$.
\end{proof}

Equation \eqref{3.20a} is of course motivated by rewriting the abstract linear pencil buckling eigenvalue problem \eqref{3.1c}, $S^* S u = \lambda S u$, $\lambda \neq 0$, in the form
\begin{equation}
\lambda^{-1} S^* S u = \lambda^{-1} (S^* S)^{1/2} \big[(S^* S)^{1/2} u\big]  
= S (S^* S)^{-1/2} \big[(S^* S)^{1/2} u\big]    \lb{3.38}
\end{equation}
and hence in the form of a standard eigenvalue problem
\begin{equation}
|S|^{-1} S |S|^{-1} w = \lambda^{-1} w, \quad \lambda \neq 0, \quad w = |S| u.  \lb{3.39}
\end{equation}

We conclude this section with a concrete example discussed explicitly in 
Grubb \cite{Gr83} (see also \cite{Gr68}--\cite{Gr71} for necessary background) and make the explicit connection with the buckling problem. It was this example which greatly motivated the abstract results in this note:

\begin{example} \lb{e3.6} $($\cite{Gr83}.$)$
Let $\cH = L^2(\Om; d^n x)$, with $\Om \subset\bbR^n$, $n \geq 2$, open and bounded, with a smooth boundary $\partial\Om$, and consider the minimal operator realization 
$S$ of the differential expression $\mathscr{S}$ in $L^{2}(\Om; d^n x)$, defined by 
\begin{align}
& S u = \mathscr{S} u,  \lb{3.40} \\
& u \in \dom(S) = H_0^{2m}(\Om) = \big\{v \in H^{2m}(\Om) \, \big| \, \gamma_{k} v =0, \, 
0 \leq k \leq 2m-1\big\},  \quad m \in\bbN,   \no
\end{align}
where 
\begin{align}
& \mathscr{S} = \sum_{0 \leq |\alpha| \leq 2m} a_{\alpha}(\cdot) D^{\alpha},   \lb{3.41} \\
& D^{\alpha} = (-i \partial/\partial x_1)^{\alpha_1} \cdots  
(-i\partial/\partial x_n)^{\alpha_n}, 
\quad \alpha =(\alpha_1,\dots,\alpha_n) \in \bbN_0^n,    \lb{3.42} \\
& a_{\alpha} (\cdot) \in C^\infty(\ol \Om),  \quad 
C^\infty(\ol \Om) = \bigcap_{k\in\bbN_0} C^k(\ol \Om),   \lb{3.43}
\end{align}
and the coefficients $a_\alpha$ are chosen such that $S$ is symmetric in 
$L^2(\bbR^n; d^n x)$, that is, the differential expression $\mathscr{S}$ is formally self-adjoint, 
\begin{equation}
(\mathscr{S} u, v)_{L^2(\bbR^n; d^n x)} = (u, \mathscr{S} v)_{L^2(\bbR^n; d^n x)}, \quad 
u, v \in C_0^\infty(\Om),   \lb{3.44}
\end{equation}
and $\mathscr{S}$ is strongly elliptic, that is, for some $c>0$, 
\begin{equation}
\Re\bigg(\sum_{|\alpha|=2m} a_{\alpha} (x) \xi^{\alpha}\bigg) \geq c |\xi|^{2m}, \quad 
x \in \ol \Om, \; \xi \in\bbR^n.   \lb{3.45}
\end{equation}
In addition, we assume that $S\geq \varepsilon I_{L^{2}(\Om; d^n x)}$ for some 
$\varepsilon >0$.\ The trace operators $\gamma_k$ are defined as follows: Consider 
\begin{equation}
\mathring{\gamma}_k : \begin{cases} C^\infty(\ol \Om) \to C^\infty(\partial \Om) \\
 u \mapsto (\partial^k_n u)|_{\partial\Om}, \end{cases}     \lb{3.46}
\end{equation}
with $\partial_n$ denoting the interior normal derivative. The map 
$\mathring{\gamma}$ then extends by continuity to a bounded operator 
\begin{equation}
\gamma_k : H^s(\Om) \to H^{s-k - (1/2)}(\partial\Om), \quad s > k + (1/2),  \lb{3.47}
\end{equation}
in addition, the map 
\begin{equation}
\gamma^{(r)} = (\gamma_0,\dots,\gamma_r) : H^s(\Om) \to 
\prod_{k=0}^r H^{s-k-(1/2)}(\partial\Om), \quad s > r +(1/2),    \lb{3.48}
\end{equation}
satisfies
\begin{equation}
\ker\big(\gamma^{(r)}\big) = H_0^s(\Om), \quad \ran\big(\gamma^{(r)}\big) 
= \prod_{k=0}^r H^{s-k-(1/2)}(\partial\Om).     \lb{3.49}
\end{equation}
Then $S^*$, the maximal operator realization of $\mathscr{S}$ in $L^{2}(\Om; d^n x)$, is given by
\begin{equation}
S^* u = \mathscr{S} u, \quad u \in \dom(S^*) 
= \big\{v\in L^{2}(\Om; d^n x) \,\big|\, \mathscr{S} v\in L^{2}(\Om; d^n x)\big\},  
\lb{3.50}
\end{equation}
and $S_F$ is characterized by
\begin{equation}
S_F u = \mathscr{S} u, \quad u \in \dom(S_F) 
= \big\{v \in H^{2m}(\Om) \, \big| \, \gamma_k v =0, \, 0 \leq k \leq m-1\big\}.  \lb{3.51}
\end{equation}
The Krein--von Neumann extension $S_K$ of $S$ then has the domain
\begin{equation}
\dom(S_K) = H_0^{2m}(\Om) \dotplus \ker(S^*), \quad \dim(\ker(S^*)) = \infty,   \lb{3.52}
\end{equation}
and elements $u \in \dom(S_K)$ satisfy the nonlocal boundary condition
\begin{align}
& \gamma_N u - P_{\gamma_D,\gamma_N} \gamma_D u =0,     \lb{3.53} \\
& \gamma_D u = (\gamma_0 u,\dots,\gamma_{m-1} u), \quad 
\gamma_N u = (\gamma_m u,\dots,\gamma_{2m-1} u), \quad u \in \dom(S_K),     
\lb{3.54}
\end{align}
where 
\begin{align}
\begin{split}
& P_{\gamma_D, \gamma_N} = \gamma_N \gamma_Z^{-1} : 
\prod_{k=0}^{m-1} H^{s-k-(1/2)} (\partial\Om) \to 
\prod_{j=m}^{2m-1} H^{s-j-(1/2)} (\partial\Om)    \\
& \quad \text{continuously for all $s\in\bbR$,}
\end{split}
\end{align}
and $\gamma_Z^{-1}$ denotes the inverse of the isomorphism $\gamma_Z$ given by
\begin{align}
&\gamma_D : Z_{\mathscr{S}}^s \to \prod_{k=0}^{m-1} H^{s-k-(1/2)} (\partial\Om),  \\
& Z_{\mathscr{S}}^s = \big\{u\in H^s(\Om) \, \big| \, \mathscr{S} u =0 \, \text{in  
$\Om$ in the sense of distributions in $\cD^\prime(\Om)$}\big\}, \quad s\in\bbR. 
\end{align}

Moreover one has
\begin{equation}
\big(\hatt S\big)^{-1} = \iota_{\cW} [I_{\cH} - P_{\gamma_D, \gamma_N} \gamma_D] 
\big(\hatt S_K\big)^{-1}, 
\end{equation}
since $[I_{\cH} - P_{\gamma_D, \gamma_N} \gamma_D] \dom(S_K) \subseteq \dom(S)$ 
and $S [I_{\cH} - P_{\gamma_D, \gamma_N} \gamma_D] v = \lambda v$, 
$v\in\dom(S_K)$.

As discussed in detail in Grubb \cite{Gr83},
\begin{equation}
\sigma_{\rm ess} (S_K) = \{0\}, \quad \sigma(S_K) \cap (0,\infty) = \sigma_{\rm d} (S_K) 
\lb{3.55}
\end{equation}
and the nonzero $($and hence discrete$)$ eigenvalues of $S_K$ satisfy a Weyl-type asymptotics. The connection to a higher-order buckling eigenvalue problem established by Grubb then reads
\begin{align}
& \text{There exists $0 \neq v \in S_K$ satisfying } \, \mathscr{S} v = \lambda v 
\, \text{ in } \, \Om, \quad 
\lambda \neq 0   \lb{3.57}  \\
& \text{if and only if}  \no \\
& \text{there exists $0 \neq u \in C^\infty (\ol \Om)$ such that } \, 
\begin{cases} \mathscr{S}^2 u = \lambda \, \mathscr{S} u \, \text{ in } \, \Om, 
\quad \lambda \neq 0,  \\  
\gamma_k u = 0, \; 0 \leq k \leq 2m-1,  \end{cases}  \lb{3.58}
\end{align}
where the solutions $v$ of \eqref{3.57} are in one-to-one correspondence with the solutions $u$ of \eqref{3.58} via
\begin{equation}
u = S_F^{-1} \mathscr{S} v, \quad v = \lambda^{-1} \mathscr{S} u.   \lb{3.59}  
\end{equation}
\end{example}

Since $S_F$ has purely discrete spectrum in Example \ref{e3.6}, we note that Theorem \ref{tKAS} applies in this case. 

\begin{remark} \lb{r3.7}
In the particular case $m=1$ and $\mathscr{S} = -\Delta$, the linear pencil eigenvalue problem \eqref{3.58} (i.e., the concrete analog of the abstract buckling eigenvalue problem $S^* S u = \lambda S u$, $\lambda \neq 0$, in \eqref{3.1c}), then yields the {\it buckling of a clamped plate problem},
\begin{equation}
(-\Delta)^2u=\lambda (-\Delta) u \,\text{ in } \,\Omega, \quad \lambda \neq 0, \; 
u\in H^2_0(\Omega),    \lb{3.60}
\end{equation}
as distributions in $H^{-2}(\Om)$. Here we used the fact that for any nonempty bounded open set $\Om\subset\bbR^n$, $n\in\bbN$, $n\geq 2$, 
$(-\Delta)^m\in \cB\big(H^k(\Om), H^{k-2m}(\Om)\big)$, $k\in\bbZ$, $m\in\bbN$. In addition, if $\Om$ is a Lipschitz domain, then one has that 
$-\Delta\colon H^1_0(\Om) \to H^{-1}(\Om)$ is an isomorphism and similarly, 
$(-\Delta)^2\colon H^2_0(\Om) \to H^{-2}(\Om)$ is an isomorphism. (For the natural norms on $H^k(\Om)$, $k\in\bbZ$, see, e.g., \cite[p.\ 73--75]{Mc00}.)
 We refer, for instance, to \cite[Sect.\ 4.3B]{Be77} for a derivation of \eqref{3.60} from the fourth-order system of quasilinear von K{\'a}rm{\'a}n partial differential equations. To be precise, \eqref{3.60} should also be considered in the special case $n=2$.
\end{remark}

\begin{remark} \lb{r3.8}
We emphasize that the smoothness hypotheses on $\partial\Om$ can be relaxed in the special case of the second-order Schr\"odinger operator associated with the differential expression $-\Delta + V$, where $V\in L^\infty(\Om; d^nx)$ is real-valued: Following the treatment of self-adjoint extensions of $S = \ol{(-\Delta + V)|_{C_0^\infty(\Om)}}$ on quasi-convex domains $\Om$ first introduced in \cite{GM09}, the case of the 
Krein--von Neumann extension $S_K$ of $S$ on such quasi-convex domains (which are close to minimally smooth) is treated in great detail in \cite{AGMT09}. In particular, 
a Weyl-type asymptotics of the associated (nonzero) eigenvalues of $S_K$ has been proven in \cite{AGMT09}. In the higher-order smooth case described in Example 
\ref{e3.6}, a Weyl-type asymptotics for the nonzero eigenvalues of $S_K$ has been proven by Grubb \cite{Gr83} in 1983.
\end{remark}

\noindent {\bf Acknowledgments.}
We are indebted to Gerd Grubb, Konstantin Makarov, Mark Malamud, 
and Eduard Tsekanovskii for a critical reading of our manuscript, and for providing us with numerous additional insights into this circle of ideas.


\end{document}